\documentclass[12pt,letterpaper]{amsart}
\usepackage[margin=1.4in]{geometry}

\usepackage[T1]{fontenc}
\usepackage{lmodern}
\usepackage{microtype}
\usepackage{amsmath,amssymb,amsthm,mathtools}
\usepackage{xcolor}
\usepackage{enumitem}
\usepackage[colorlinks=true,linkcolor=blue!55!black,citecolor=blue!55!black,
            urlcolor=blue!55!black]{hyperref}
\usepackage[nameinlink, noabbrev]{cleveref}
\newtheorem{theorem}{Theorem}
\newtheorem{lemma}[theorem]{Lemma}
\newtheorem{proposition}[theorem]{Proposition}
\newtheorem{conjecture}[theorem]{Conjecture}
\newcommand{\kk}{\mathbb{R}}
\newcommand{\supp}{\operatorname{supp}}
\newcommand{\vsupp}{\operatorname{vsupp}}
\newcommand{\init}{\operatorname{in}_{\prec}}
\newcommand{\dist}{\operatorname{dist}}

\newcommand{\Nbd}{\operatorname{Nbd}}
\newcommand{\cS}{\mathcal{S}}
\newcommand{\cF}{\mathcal{F}}
\newcommand{\cH}{\mathcal{H}}
\newcommand{\cL}{\mathcal{L}}
\newcommand{\dk}{\partial_{k+1}}
\newcommand{\sk}{\mathcal{S}_k^a}
\newcommand{\skp}{\mathcal{S}_{k+1}^a}

\newcommand{\rk}{\operatorname{rk}}
\newlist{lemmaparts}{enumerate}{1}
\setlist[lemmaparts]{
  label=\textup{(\roman*)},
  ref=\textup{(\roman*)},
  leftmargin=*,
  topsep=.4\baselineskip,
  itemsep=.3\baselineskip,
  parsep=0pt
}

\crefname{lemmapartsi}{part}{parts}
\Crefname{lemmapartsi}{Part}{Parts}

\title{The $\boldsymbol{g}$-theorem in smooth approximation}
\author{Joel Hakavuori}
\date{}
\address{IMJ-PRG, Sorbonne Universit\'e, F-75005 Paris, France}
\email{hakavuori@imj-prg.fr}

\begin{document}
\begin{abstract}
We show that a sequence of simplicial polytopes approximating a smooth convex body has primitive Betti numbers diverging from the upper bound given by the $g$-theorem. This complements a result of Adiprasito--Nevo--Samper, who proved the corresponding divergence from the lower bound. The proof uses a new lower bound for the size of the shadow of a set of monomials.
\end{abstract}
\maketitle

\section{Introduction}

By the $g$-theorem \cite{Stanley, BilleraLee}, the $g$-vector of a simplicial $d$-polytope $P$ is the Hilbert function of a standard graded Artinian algebra. In other words, there exists a finite-dimensional graded quotient $A_* = \bigoplus_{i=0}^{\lfloor \frac d 2 \rfloor } A_i$ of a polynomial ring with $g_i = \dim A_i$. By Macaulay's theorem \cite{Macaulay}, the Hilbert function of a standard graded algebra satisfies $\dim A_{k} \geq \partial_{k+1} (\dim A_{k+1})$, where $\partial_i$ is the $i$th lower Macaulay function. This function is defined as the minimum possible cardinality of the shadow $\partial \cF := \{ m/x_j \ : \ m \in \cF, \ x_j \mid m\} $ of a set $\cF$ of degree-$i$ monomials with $|\cF| = a$. It can be defined explicitly as follows: for all positive integers $i$ and $a$, $a$ can be written uniquely in the form
\[
 a=\binom{a_i}{i}+\binom{a_{i-1}}{i-1}+\cdots+\binom{a_j}{j}
\]
with $a_i>a_{i-1}>\cdots>a_j\geq j\geq1$. Then, $\partial_i : \mathbb N \to \mathbb N$ is defined as
\[
 \partial_i (a)
 :=\binom{a_i-1}{i-1}+\binom{a_{i-1}-1}{i-2}
       +\cdots+\binom{a_j-1}{j-1},
\]
and $\partial_i (0) := 0$. Sequences of nonnegative integers beginning with 1 and satisfying Macaulay's inequalities are also known as $M$-vectors (or $O$-sequences) in the literature.

For an $M$-vector $(g_0, \ldots, g_r)$, we define the $k$th lower Macaulay defect as
\begin{equation*}
    \delta_k := g_k - \partial_{k+1}(g_{k+1})
\end{equation*}
for $1 \le k < r$. Thus $\delta_k \geq 0$ measures how far $g_k$ is from being minimal with respect to $g_{k+1}$. 

By taking initial ideals, numerical questions about Hilbert functions of quotients of standard graded polynomial rings are reduced to the case of monomial ideals, where the Hilbert function of the quotient can be computed by counting standard monomials. If $\mathcal O_i$ denotes the degree-$i$ standard monomials of such a quotient, so that $\partial \mathcal O_{k+1} \subseteq \mathcal O_{k}$, then
\[
 \partial_{k+1}(|\mathcal O_{k+1}|)
 \leq|\partial\mathcal O_{k+1}|
 \leq|\mathcal O_k|.
\]
Thus $\delta_k (S/I) = |\mathcal O_k| - \partial_{k+1}(|\mathcal O_{k+1}|) = 0$ implies that $|\partial\mathcal O_{k+1}|$ is minimal with respect to $|\mathcal O_{k+1}|$ and that there are no degree-$k$ standard monomials outside the shadow $\partial \mathcal O_{k+1}$. Thus, to have a small shadow, the degree-$k$ divisors of the degree-$(k+1)$ monomials must overlap significantly. For example, the set of the lexicographically smallest degree-$(k+1)$ monomials has a shadow of minimal cardinality. In the opposite direction, if the supports of the monomials are spread out in the variables, the divisors cannot overlap in the manner required for $\delta_k = 0$. We make this quantitative in \Cref{lem:separated-shadow}.


For simplicial polytopes, examples with $\delta_k(P) = g_k(P) - \partial_{k+1}(g_{k+1}(P)) = 0$ include all $(k+1)$-neighborly polytopes. In particular, every cyclic polytope has $\delta_k = 0$ throughout the whole $g$-vector. Thus $\delta_k = 0$ can occur even for polytopes with arbitrarily many faces of each dimension. The following conjecture of Kalai predicts that this extremality cannot hold for a sequence of simplicial polytopes approximating a smooth convex body.

\begin{conjecture}[Kalai {\cite{Kalai}}]
\label[conjecture]{conj:kalai}
Let $K\subseteq\mathbb{R}^d$ be a $C^1$ convex body, and let
$P_n$ be a sequence of simplicial $d$-polytopes converging to $K$ in
the Hausdorff metric. Then
\begin{enumerate}[label=\textup{(\roman*)}]
 \item for every $1\leq k\leq\left\lfloor\frac d2\right\rfloor$,
 \[
  g_k(P_n)\longrightarrow \infty,
 \]

 \item and for every $1\leq k\leq\left\lfloor\frac d2\right\rfloor-1$,
 \[
  \delta_k (P_n) = g_k(P_n)-\partial_{k+1}\bigl(g_{k+1}(P_n)\bigr)
  \longrightarrow \infty
 \]
 as $n \to \infty$.
\end{enumerate}
\end{conjecture}

The first part of this conjecture was proved by Adiprasito--Nevo--Samper \cite[Theorem~4.5]{ANS} by producing, for any $s>0$, $s$ linearly independent affine $k$-stresses on $P_n$ for large enough $n$. Additionally, using results of B\'ar\'any \cite{Barany}, they proved part~\textup{(ii)} for the model for random polytopes considered in \cite[Remark~5.7]{ANS}. The purpose of this paper is to prove part~\textup{(ii)} of \Cref{conj:kalai} in full.

Let $\Delta$ be a simplicial complex. For a nonempty subset $A\subseteq V(\Delta)$ and an integer $r\geq0$, define $\Nbd_\Delta^r[A]:= \{v\in V(\Delta):\dist_{\Delta^{(1)}}(v,A)\leq r\}$, where $\dist$ denotes graph distance in the 1-skeleton $\Delta^{(1)}$. For a subcomplex $\Gamma \subseteq \Delta$, set $\Nbd_\Delta^r[\Gamma] := \Nbd_\Delta^r[V(\Gamma)]$.

Let $\gamma \subseteq \partial P = \Delta$ be a simplicial $k$-sphere that is a subcomplex of $\Delta$. We say that $\gamma$ is \emph{homologically full} in $\Delta$ if 
\[
 [\gamma] \neq 0\in
 \widetilde H_{k}
 \bigl(\Delta_{V(\gamma)};\mathbb R\bigr),
\]
where $[\gamma]$ is the fundamental class of $\gamma$ and $\Delta_{V(\gamma)}$ is the induced subcomplex of $\Delta$ on the vertices of $\gamma$. For example, every simplicial sphere $\gamma$ that is an induced subcomplex of $\Delta$ is automatically homologically full.

\begin{theorem}
\label[theorem]{mainthm}
    Let $1 \le k \le \lfloor \frac{d}{2}\rfloor - 1$, and let $P \subseteq \mathbb R^d$ be a simplicial $d$-polytope with $\Delta = \partial P$. Assume there exist $s$ homologically full $(k-1)$-spheres $\gamma_1, \ldots, \gamma_s$ in $\partial P$, with
    \begin{equation*}
        \Nbd_\Delta^2[\gamma_i] \cap \Nbd_\Delta^2[\gamma_j] = \varnothing
    \end{equation*}
    for $i \neq j$. Then
    \begin{equation*}
        g_k(P) - \partial_{k+1}(g_{k+1}(P)) \geq s - \partial_{k+1}(s).
    \end{equation*}
\end{theorem}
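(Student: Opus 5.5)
We will work in the Stanley--Reisner ring $\kk[\Delta]$ modulo a generic linear system of parameters $\theta$, and then quotient further by the Lefschetz element $\ell$. This gives a standard graded Artinian algebra $A = \kk[\Delta]/(\theta,\ell)$ with $\dim A_i = g_i(P)$ for $i \le \lfloor d/2\rfloor$. This is the algebraic $g$-theorem, which we may assume.

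Next we take a generic (or suitably chosen) term order $\prec$. Passing to the initial ideal, the degree-$i$ standard monomials $\mathcal O_i$ satisfy $|\mathcal O_i| = g_i$ and $\partial\mathcal O_{k+1}\subseteq\mathcal O_k$. The goal then becomes to exhibit $s$ degree-$k$ standard monomials that are "spread out", so that \Cref{lem:separated-shadow} applies. That lemma, as promised in the introduction, should give the following: if $\mathcal O_k$ contains $s$ elements with pairwise separated supports, then
\[
|\mathcal O_k| - \partial_{k+1}(|\mathcal O_{k+1}|) \ge s-\partial_{k+1}(s).
\]
Heuristically, the contribution of these separated monomials to the shadow cannot be shared with the other monomials.

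The geometric input is that each homologically full $(k-1)$-sphere $\gamma_i$ produces a nonzero class in $A_k$ localized near $\gamma_i$. Concretely, we take the monomial $x_{\gamma_i}$, the product over a facet of $\gamma_i$, or the fundamental class paired via the isomorphism of $A_k$ with the space of $k$-stresses. Homological fullness says $[\gamma_i]$ survives in the induced subcomplex. By the partition-of-unity / biased-pairing style arguments of Adiprasito--Nevo--Samper, this yields a $k$-stress, or dually a degree-$k$ element, supported on $\Nbd^1_\Delta[\gamma_i]$ that is nonzero in $A$. The disjointness of the $2$-neighborhoods then gives two facts. First, these $s$ elements are linearly independent. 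Second, any product of one of them with a variable, or any interaction between them, is supported near a single $\gamma_i$; vertices within distance $2$ of two different spheres do not exist. So after choosing the term order to be compatible, for instance by ordering variables block by block, the initial monomials of these elements can be taken in $\mathcal O_k$ with supports in the pairwise disjoint sets $\Nbd^1_\Delta[\gamma_i]$. Moreover, multiplying by a variable $x_v$ kills or keeps them locally, so the relevant degree-$(k+1)$ information also decomposes by neighborhood.

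The main obstacle is the transfer step: making the separated structure visible at the level of standard monomials with respect to a single term order, while $\theta$ and $\ell$ are global generic linear forms that mix all variables. To handle this, I would first try to avoid initial ideals for this step. I would prove the separated-shadow inequality directly for the algebra: pick the subspace $U\subseteq A_k$ spanned by the $s$ local classes. I would then argue that $A_k \supseteq U \oplus (\text{something containing } A_1\cdot\text{preimage of } A_{k+1}\text{-shadow})$ with $U\cap A_{k-?}$-interactions controlled by locality, and only then apply Macaulay/Green-type bounds. If the lemma is genuinely monomial, I would instead use a term order in which the variables of $\Nbd^2_\Delta[\gamma_i]$ form consecutive blocks, and check via locality of the stresses that the leading monomials of the localized classes stay in their blocks.

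The final step is then a direct application of \Cref{lem:separated-shadow} to conclude the stated bound.
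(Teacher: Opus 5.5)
There is a genuine gap. Your reduction is the right one: once $\mathcal O_k$ is known to contain $s$ monomials $\mu_1,\dots,\mu_s$ with the sets $\Nbd_\Delta[\supp(\mu_i)]$ pairwise disjoint, the bound follows. But you never produce these monomials. You flag the transfer step as the main obstacle and leave it open.

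In the quotient $A=\kk[\Delta]/(\theta,\ell)$ there is no evident way to extract, from a class represented near $\gamma_i$, a standard monomial supported near $\gamma_i$. Normal forms are computed modulo the global forms $\theta_j,\ell$, and you give no argument that a block order repairs this. The first sketch, with its unspecified ``$U\cap A_{k-?}$'' interactions, is not an argument.

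The missing idea is to work in the inverse system throughout, with the geometric $\theta$ and $\ell=\sum_v X_v$ (the setting of \Cref{lem:local-stress}). Put $\cF_j:=\init(\cS^a_j)$. Then $|\cF_j|=g_j$, and every element of $\cF_j$ is a $\Delta$-monomial. Moreover $\partial\cF_{k+1}\subseteq\cF_k$, because $\partial_v\cS^a_{k+1}\subseteq\cS^a_k$ and $\init(\partial_v f)=\init(f)/x_v$ whenever $x_v\mid\init(f)$. These $\cF_j$ are exactly the standard monomials of $I_\Delta+(\theta,\ell)$ for the order reversed within each degree, i.e.\ the precise form of your $\mathcal O_j$.

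The local stresses $\lambda_i$ are genuine polynomials with $\vsupp(\lambda_i)\subseteq\Nbd_\Delta[\gamma_i]$. So for \emph{any} monomial order, $\mu_i:=\init(\lambda_i)$ lies in $\cF_k$ with $\supp(\mu_i)\subseteq\Nbd_\Delta[\gamma_i]$. The $\mu_i$ are distinct since their supports are nonempty and disjoint. No special order is needed, and nothing about multiplying by variables or decomposing degree $k+1$ by neighborhoods is needed either.

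Second, \Cref{lem:separated-shadow} is not the statement you guessed, so the last step is not a direct application. Its hypothesis is that separated vertices divide monomials of the \emph{upper}-degree set. You need a case split:
\begin{itemize}
\item Let $p$ be the number of $\mu_i$ lying in $\partial\cF_{k+1}$. The other $s-p$ of them give $|\cF_k|-|\partial\cF_{k+1}|\geq s-p$.
\item For each $\mu_i\in\partial\cF_{k+1}$ choose $v_i\in\supp(\mu_i)$. Then $x_{v_i}$ divides a monomial of $\cF_{k+1}$, and the sets $\Nbd_\Delta[v_i]\subseteq\Nbd^2_\Delta[\gamma_i]$ are pairwise disjoint; this is where the $2$-neighborhood hypothesis enters.
\item The lemma in degree $k+1\geq2$ then gives $|\partial\cF_{k+1}|-\partial_{k+1}(|\cF_{k+1}|)\geq p-\partial_{k+1}(p)$.
\end{itemize}
Adding, $\delta_k\geq s-\partial_{k+1}(p)\geq s-\partial_{k+1}(s)$.

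With these two repairs your route is complete, and it is slightly more direct than the paper's. The paper inserts $D_k=D(\cS^a_{k+1})$ and $t=\dim(L\cap D_k)$, bounds $\dim\cS^a_k-\dim D_k\geq s-t$, and runs the same split only on the $t$ stresses in $D_k$ via \Cref{mainbound}. Since $\partial\cF_{k+1}\subseteq\init(D_k)\subseteq\cF_k$, that intermediate layer is not needed.
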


We now give an outline of the proof of \Cref{mainthm}. Let $\mathcal S^a_k \subseteq \mathbb R [x_v \ : \ v \in V(P)]_k$ denote the space of affine $k$-stresses on $\partial P$. These are homogeneous degree-$k$ polynomials that are supported on faces of $\partial P$, and satisfy certain constant-coefficient linear differential equations. Affine stresses are the Macaulay inverse system of the quotient of the Stanley--Reisner ring of $\partial P$ by the geometric linear system of parameters given by the vertex coordinates of $P$ and the Lefschetz element $\sum_v x_v$. Fundamental properties of affine stresses include that they form a vector space of dimension $\dim_\mathbb{R} \cS^a_k = g_k(P)$, and that a linear constant-coefficient derivative of a $(k+1)$-stress is a $k$-stress. Let $D_k \subseteq \cS^a_k$ denote the vector space spanned by such derivatives of affine $(k+1)$-stresses.

In this setting, Macaulay's theorem can be stated as $\dim D_k \geq \partial_{k+1}(\dim \cS^a_{k+1})$, so we have $g_k = \dim \cS^a_{k} \geq \dim D_k \geq \partial_{k+1}(\dim \cS^a_{k+1})$. We thus write the $k$th defect $\delta_k$ as
\[
 \begin{aligned}
 \delta_k = g_k(P)-\partial_{k+1}\bigl(g_{k+1}(P)\bigr)
 &=
 \bigl [ \dim \cS^a_{k} -\dim D_{k}\bigr ]\\
 &\quad+
 \bigl [\dim D_{k}-\partial_{k+1}(\dim \cS^a_{k+1})\bigr ].
 \end{aligned}
\]
By \Cref{lem:local-stress}, the $s$ homologically full $(k-1)$-spheres $\gamma_1, \ldots, \gamma_s$ give us linearly independent affine $k$-stresses $\lambda_1, \ldots, \lambda_s$ in $\cS^a_{k}$. Let $L$ denote the span of these stresses, and let $t := \dim (L \cap D_k)$. The quotient $L/(L\cap D_k)$ injects into $\cS_k^a /D_k$, giving a lower bound of $s-t$ for the first term. 

For the second term, let $\cF$ denote the initial monomials of $\skp$ and $\cH$ the initial monomials of $D_k$ with respect to some monomial order. Then $|\cF| = \dim \skp$, $|\cH| = \dim D_k$, and $\partial \cF \subseteq \cH$. We then further split the second term into
\[
    \dim D_{k}-\partial_{k+1}(\dim \cS^a_{k+1}) = \bigl [ |\cH| - |\partial \cF|\bigr ] + \bigl [ |\partial \cF| - \partial_{k+1}(|\cF |) \bigr ] .
\]
Let $\mu_1, \ldots, \mu_s$ be the initial monomials of $\lambda_1, \ldots, \lambda_s$. We may assume that $\{\mu_1, \ldots, \mu_t\}$ belong to $\cH = \init D_k$. Assume that $\{\mu_1, \ldots, \mu_p\} \subseteq \partial \cF$, and $\{\mu_{p+1}, \ldots, \mu_t\} \subseteq \cH \setminus \partial \cF$. We then get a lower bound of $t - p$ for the first term in this second split, and the main combinatorial result of the paper, \Cref{lem:separated-shadow}, gives a lower bound of $p - \partial_{k+1}(p)$ for $|\partial \cF| - \partial_{k+1}(|\cF |)$. When combined, these three lower bounds prove the main result.

To deduce part~\textup{(ii)} of \Cref{conj:kalai} from \Cref{mainthm}, we use the construction of Adiprasito--Nevo--Samper, reviewed in \Cref{lem:distant-spheres}, that supplies us with increasingly many homologically full $(k-1)$-spheres $\{\gamma_i \}_{i=1}^s$ as $P_n \to K$. These spheres satisfy the hypothesis in \Cref{mainthm}. As $\partial_{k+1}(s) = o(s)$, we get 
\[ 
    \delta_k \geq s - \partial_{k+1}(s) \longrightarrow \infty.
\] 
Finally, in \Cref{sec:matroid-chow} we give a further application of \Cref{lem:separated-shadow} to matroid Chow rings. We show that a collection of hyperplanes with pairwise intersections of rank at most one forces a Macaulay defect in the Hilbert function of the Chow ring.

\section{Shadows and initial subspaces}
\label[section]{sec:shadows}
The goal of this section is to prove \Cref{lem:separated-shadow}. We start with definitions and notation.

Let $\Delta$ be a simplicial complex on a finite set $V = V(\Delta)$, and let $R:=\mathbb{R}[x_v:v\in V]$ be the polynomial ring over the variables corresponding to vertices of $\Delta$. We say that a monomial $m = x^\alpha \in R$ is a \emph{$\Delta$-monomial} if $\supp (m) = \{v \ : \alpha_v > 0 \} \in\Delta$. We write $R_k(\Delta)$ for the vector subspace of $R_k$ spanned by the degree-$k$ $\Delta$-monomials.

For a finite collection $\cF$ of degree-$k$ monomials, its lower shadow is defined as $\partial\cF:=\{m/x_v\ :\ m\in\cF,\ x_v\mid m\}$. For a nonempty subset $A\subseteq V(\Delta)$ and an integer $r\geq0$, recall that $\Nbd_\Delta^r[A]:= \{v\in V(\Delta):\dist_{\Delta^{(1)}}(v,A)\leq r\}$,
where $\dist_{\Delta^{(1)}}(A,B) := \min\{\dist_{\Delta^{(1)}}(a,b):a\in A,\ b\in B\}$ denotes graph distance in the 1-skeleton $\Delta^{(1)}$. We abbreviate $\Nbd_\Delta[A]:=\Nbd_\Delta^1[A]$. For a polynomial $f \in R$, let $\vsupp (f) := \cup_{m \in \supp (f)} \supp (m) \subseteq V(\Delta)$ denote the set of vertices on which $f$ is supported.

We start with the following numerical properties of the $k$th lower Macaulay function $\partial_k$ that will be needed in the proof of \Cref{lem:separated-shadow}.

\begin{lemma}
\label[lemma]{lem:macaulay-numerics}
\leavevmode\par
\begin{lemmaparts}
\item
\label{part:macaulay-balance}
Let $k, p, b$ be integers with $k\geq2$, $p\geq 1$, and $0\leq b<\partial_k(p)$. Then
\begin{equation*}
 \partial_{k-1}(b)+\partial_k(p-b)\geq \partial_k(p).
\end{equation*}

\item 
\label{part:macaulay-sum}
Let $k, p, b_1, \ldots, b_p$ be integers with $k\geq1$, $p\geq1$, and $b_i \geq 1$ for all $i$. Then
\begin{equation*}
 \sum_{i=1}^p \partial_k(b_i)
 \geq
 \partial_k\!\left(\sum_{i=1}^p b_i-p+1\right)+p-1.
\end{equation*}

\end{lemmaparts}
\end{lemma}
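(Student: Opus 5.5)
Both parts are numerical statements about $\partial_k$, so rather than working with the binomial expansions I would use the extremal interpretation recalled in the introduction: $\partial_k(a)$ is the minimum of $|\partial\cF|$ over sets $\cF$ of $a$ degree-$k$ monomials, and this minimum is attained by the lexicographic initial segment. Each inequality then reduces to exhibiting one family $\cF$ of the right cardinality whose shadow has cardinality at most the left-hand side. I would also use two standard facts from the Kruskal--Katona/Macaulay theory:
\begin{itemize}
\item the shadow of a lex initial segment of degree $k$ is again a lex initial segment of degree $k-1$;
\item lex initial segments in a fixed degree are nested.
\end{itemize}

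For part~\ref{part:macaulay-sum} I would first reduce to $p=2$ by induction on $p$. Assuming the inequality for $p-1$, we get
\[
\sum_{i=1}^p\partial_k(b_i)\ \ge\ \partial_k\Bigl(\sum_{i<p}b_i-p+2\Bigr)+p-2+\partial_k(b_p),
\]
and the case $p=2$ applied to the pair $\bigl(\sum_{i<p}b_i-p+2,\ b_p\bigr)$ finishes the induction. For $p=2$, the claim is $\partial_k(b)+\partial_k(c)\ge\partial_k(b+c-1)+1$, and I would construct a family directly.
\begin{itemize}
\item Take a common variable $x_0$ and disjoint sets of further variables $Y$ and $Z$.
\item Let $\cF_1$ be the lex initial segment of size $b$ in the variables $\{x_0\}\cup Y$, with $x_0$ largest. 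Define $\cF_2$ in the same way in $\{x_0\}\cup Z$, with size $c$. Both contain $x_0^k$, and they intersect only in $x_0^k$.
\item Their shadows intersect only in $x_0^{k-1}$, the only degree-$(k-1)$ monomial in $x_0$ alone. Note that $x_0^{k-1}$ does lie in both shadows.
\end{itemize}
So $\cF_1\cup\cF_2$ has $b+c-1$ elements and a shadow of size $\partial_k(b)+\partial_k(c)-1$. Minimality of $\partial_k$ then gives the claim. When $k=1$, $\partial_1\equiv1$ on positive integers and the inequality is an equality, so no construction is needed there.

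For part~\ref{part:macaulay-balance} I would use the decomposition that appears in the compression proof of Kruskal--Katona. Take a new variable $x_0$, a set $B$ of $b$ degree-$(k-1)$ monomials and a set $C$ of $p-b$ degree-$k$ monomials, all in variables other than $x_0$, and put $\cF:=x_0B\cup C$, which has $p$ elements. Its shadow is $B\cup x_0\,\partial B\cup\partial C$. Now choose $B$ and $C$ to be lex initial segments in the same variables.
\begin{itemize}
\item \emph{Case $\partial_k(p-b)\ge b$.} Here $\partial C$ is a lex initial segment of size at least $b$, so $B\subseteq\partial C$. Hence $|\partial\cF|=\partial_k(p-b)+\partial_{k-1}(b)$, and this is at least $\partial_k(p)$ by minimality.
\item \emph{Case $\partial_k(p-b)<b$.} Now $\partial C\subsetneq B$, and the same construction only yields $b+\partial_{k-1}(b)\ge\partial_k(p)$. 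This is too weak, so this case is the main obstacle.
\end{itemize}

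To handle the second case, I would first try to reduce it to the first. Set $g(b'):=\partial_{k-1}(b')+\partial_k(p-b')$, and let $b^*<b$ be the largest value with $\partial_k(p-b^*)\ge b^*$; the first case gives $g(b^*)\ge\partial_k(p)$. The hope is to show that $g$ does not decrease on $[b^*,b]$. Moving $x_0$-monomials from $B$ into $C$ lowers $\partial_{k-1}$ and raises $\partial_k$, so this is a comparison of increments of the two Macaulay functions. I would try to carry it out via the explicit cascade expansion of $p$, using the hypothesis $b<\partial_k(p)$ to control where the crossover can occur. If that monotonicity proves messy, the fallback is a direct computation with the $k$-binomial representations: write $p=\binom{a_k}{k}+\cdots$, split into the cases $b\le\binom{a_k-1}{k-1}$ and $b>\binom{a_k-1}{k-1}$, and induct on $k$ using Pascal's rule.
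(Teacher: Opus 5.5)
\textbf{Gap in part (i).} The hard case of part (i) is left open. The paper does not prove (i) either; it quotes it as \cite[Theorem~1]{AFML}. Your family $x_0B\cup C$ only settles the case $\partial_k(p-b)\ge b$, and there the hypothesis $b<\partial_k(p)$ is never used. The hypothesis matters exactly in the case $\partial_k(p-b)<b$, and that case does occur.

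Moreover, the reduction you propose for that case is false. Take $k=3$ and $p=20=\binom{6}{3}$, so $\partial_3(p)=\binom{5}{2}=10$, and take $b=9$.
\begin{itemize}
\item $\partial_3(11)=7<9$, so $b=9$ is in the hard case.
\item $\partial_3(12)=8\ge 8$, so $b^*=8$.
\item $g(8)=\partial_2(8)+\partial_3(12)=4+8=12$, but $g(9)=\partial_2(9)+\partial_3(11)=4+7=11$.
\end{itemize}
So $g$ strictly decreases on $[b^*,b]$. The inequality $g(9)\ge\partial_3(20)$ still holds, but it cannot be obtained by comparing with $g(b^*)$.

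Nor can a different choice of $B$ and $C$ rescue the construction in this case. With $|B|=b$ and $x_0\nmid C$, the shadow always contains $B\sqcup x_0\,\partial B$. That has at least $b+\partial_{k-1}(b)>\partial_{k-1}(b)+\partial_k(p-b)$ elements.

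Your fallback via $k$-binomial expansions and Pascal's rule is only named, not carried out, and that computation is precisely the nontrivial content of the cited theorem. You must either cite \cite[Theorem~1]{AFML}, as the paper does, or give an actual proof of the case $\partial_k(p-b)<b$.

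\textbf{Part (ii).} This is essentially the paper's proof: two extremal families in variable sets meeting in one variable, intersecting only in $x_0^k$, with shadows intersecting only in $x_0^{k-1}$, then the same induction on $p$. However, your ordering convention is wrong as stated. With $x_0$ the \emph{largest} variable, the initial lex segment is not shadow-minimizing. For $k=2$ it begins $x_0^2, x_0y_1, x_0y_2$, whose shadow has $3>\partial_2(3)=2$ elements. You need the lexicographically smallest monomials with $x_0$ the \emph{smallest} variable, as in the paper; equivalently, revlex initial segments with $x_0$ largest. The ``standard facts'' you invoke (minimality, shadows of initial segments are initial segments, nestedness) hold for that order, so the same correction applies to your treatment of part (i).
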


\begin{proof}
\Cref{part:macaulay-balance} is \cite[Theorem~1]{AFML}. For \cref{part:macaulay-sum}, the case $p=1$ is an equality, so consider the case $p=2$. Let $b,c>0$. Set $\mathcal B$ and $\mathcal C$ to be the lexicographically smallest $b$ and $c$ degree-$k$ monomials in two sets of variables intersecting only in $z$, with $z$ the smallest variable in both orders. Their shadows have cardinalities $\partial_k(b)$ and $\partial_k(c)$, respectively. 

Then $\mathcal B\cap\mathcal C=\{z^k\}$, $\partial\mathcal B\cap\partial\mathcal C=\{z^{k-1}\}$, and $|\mathcal B\cup\mathcal C|=b+c-1$. Furthermore, we have $|\partial(\mathcal B\cup\mathcal C)| = \partial_k(b)+\partial_k(c)-1$. As $\partial_k$ is the minimal size of the shadow of a set of degree-$k$ monomials, we have
\begin{equation}
 \partial_k(b+c-1)
 \leq
 \partial_k(b)+\partial_k(c)-1.
 \label{eq:macaulay-pair}
\end{equation}
The $p\ge 3$ case follows by induction: set $c:=\sum_{i=1}^{p-1}b_i-p+2$, so that inductively we have
\[ \partial_k(c)
 \leq
 \sum_{i=1}^{p-1}\partial_k(b_i)-(p-2).
\]
Applying \cref{eq:macaulay-pair} to $c$ and $b_p$, we obtain
\[
 \begin{aligned}
 \partial_k\!\left(\sum_{i=1}^p b_i-p+1\right)
 &=
 \partial_k(c+b_p-1)\\
 &\leq
 \partial_k(c)+\partial_k(b_p)-1\\
 &\leq
 \sum_{i=1}^p \partial_k(b_i)-(p-1),
 \end{aligned}
\]
proving part (ii).
\end{proof}

Next, we prove the main combinatorial result of the paper. Roughly speaking, the lemma states that the more spread out a set $\cF$ of $\Delta$-monomials is on $\Delta$, the further away $|\partial \cF|$ is from being minimal with respect to $|\cF|$.

\begin{lemma}
\label[lemma]{lem:separated-shadow}
Let $\Delta$ be a simplicial complex, and let $\cF$ be a finite set of degree-$k$ $\Delta$-monomials for $k\geq2$. Suppose that there are $v_1,\ldots,v_p\in V(\Delta)$ satisfying
\[
 \Nbd_\Delta[v_i]\cap\Nbd_\Delta[v_j]=\varnothing
\]
for $i \neq j$, and that for every $v_i$, some monomial of $\cF$ is divisible by $x_{v_i}$. Then
\begin{equation}
 |\partial\cF|-\partial_k(|\cF|)
 \geq
 p-\partial_k(p).
 \label{eq:separated-shadow-defect}
\end{equation}
\end{lemma}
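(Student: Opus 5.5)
Throughout I write $n:=|\cF|$. I will proceed by induction on $p$, peeling off one vertex at a time. The idea is to split $\cF$ according to divisibility by $x_{v_1}$ and use the separation hypothesis to make the shadows of the two pieces nearly disjoint. Set
\[
\cF_1:=\{m\in\cF : x_{v_1}\mid m\},\qquad \cF':=\cF\setminus\cF_1,\qquad b:=|\cF_1|\ge 1 .
\]

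\textbf{Step 1 (the shadow of $\cF_1$).} Every $m\in\cF_1$ is a $\Delta$-monomial containing $v_1$, so $\supp(m)\subseteq\Nbd_\Delta[v_1]$. Dividing by $x_{v_1}$ gives the set $\cF_1/x_{v_1}$ of degree-$(k-1)$ monomials, of size $b$. These quotients all lie in $\partial\cF$. Their own shadows, multiplied back by $x_{v_1}$, also lie in $\partial\cF_1$. So $\partial\cF_1$ contains the disjoint union of $x_{v_1}\cdot\partial(\cF_1/x_{v_1})$ (monomials divisible by $x_{v_1}$) and the monomials of $\partial\cF_1$ not divisible by $x_{v_1}$. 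The latter include $m/x_{v_1}^{\alpha}$-type divisors when $x_{v_1}$ appears with multiplicity one. Rather than refine this, I would simply use Macaulay: $|\partial\cF_1|\ge \partial_k(b)$.

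\textbf{Step 2 (overlap control).} The key point is how much $\partial\cF_1$ can meet $\partial\cF'$. Take a monomial $u$ in $\partial\cF_1\cap\partial\cF'$.
\begin{itemize}[label={}]
\item If $x_{v_1}\mid u$, then $u=m'/x_w$ with $m'\in\cF'$, so $x_{v_1}\mid m'$, contradicting the definition of $\cF'$.
\item So $u$ is not divisible by $x_{v_1}$. Then $u=m/x_{v_1}$ with $x_{v_1}\,\|\, m$, and $\supp(u)\subseteq\Nbd_\Delta[v_1]\setminus\{v_1\}$.
\end{itemize}
Hence the overlap is contained in $\{u\in\partial\cF' : \supp(u)\subseteq \Nbd_\Delta[v_1]\}$. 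Since the neighbourhoods of $v_2,\dots,v_p$ avoid $\Nbd_\Delta[v_1]$, the sub-collection of $\cF'$ witnessing these $u$ uses none of $v_2,\dots,v_p$. I would therefore split $\cF'$ further:
\begin{itemize}[label={}]
\item $\cF''$: monomials touching some $v_j$ with $j\ge2$;
\item the rest, which can be absorbed into the $v_1$-block.
\end{itemize}
This gives a decomposition $\cF=\mathcal G_1\sqcup\mathcal G_2\sqcup\cdots\sqcup\mathcal G_p$. It is arranged so that, for $i\ne j$, $\partial\mathcal G_i\cap\partial\mathcal G_j$ consists only of monomials supported away from all $v$'s. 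Here $\mathcal G_i$ collects the monomials whose support meets $\Nbd_\Delta[v_i]$ in a way that forces divisibility by $x_{v_i}$; a leftover block $\mathcal G_0$ is placed in one of them.

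\textbf{Step 3 (counting).} This is the step I expect to be the main obstacle. One needs a clean inequality of the form
\[
|\partial\cF|\ \ge\ \sum_i \partial_k(|\mathcal G_i|)-(\text{overlaps}).
\]
The overlaps must be bounded using \Cref{lem:macaulay-numerics}\ref{part:macaulay-balance}: the part of a shadow divisible by $x_{v_i}$ has size at least $\partial_{k-1}$ of the count of quotients, which trades off against the rest. Granting disjointness up to small overlap, \Cref{lem:macaulay-numerics}\ref{part:macaulay-sum} gives
\[
\sum_{i=1}^p\partial_k(b_i)\ \ge\ \partial_k\Bigl(\sum_i b_i-p+1\Bigr)+p-1 .
\]
With $\sum_i b_i=n$, this yields $|\partial\cF|\ge \partial_k(n-p+1)+p-1$. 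It then remains to check the numerical inequality
\[
\partial_k(n-p+1)+p-1\ \ge\ \partial_k(n)+p-\partial_k(p),
\]
which should follow from the subadditivity estimate \eqref{eq:macaulay-pair} applied to $n=(n-p+1)+p-1$: indeed $\partial_k(n)\le\partial_k(n-p+1)+\partial_k(p)-1$. So the real work is in establishing the near-disjointness in Step 2 precisely enough. What should make it go through is that each shared divisor costs at most one per block, as in the $z^{k-1}$ overlap of the proof of \Cref{lem:macaulay-numerics}.
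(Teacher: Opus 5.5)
\textbf{There is a genuine gap in Step 3.} Your outer skeleton matches the paper's. You reduce to $|\partial\cF|\ge\partial_k(|\cF|-p+1)+p-1$ and finish with the pair inequality \eqref{eq:macaulay-pair}, and that final numerical step is correct. The gap is the proof of the intermediate inequality, which is the entire content of the lemma.

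Your plan needs a partition $\cF=\mathcal G_1\sqcup\cdots\sqcup\mathcal G_p$ with $|\partial\cF|\ge\sum_i\partial_k(|\mathcal G_i|)$, followed by part (ii) of \Cref{lem:macaulay-numerics} at level $k$. Any overlap loss $\omega$ survives to the end as $p-\partial_k(p)-\omega$, so you need $\omega=0$. That inequality is false in general. The monomials divisible by no $x_{v_i}$ can have shadows covering the $x_{v_i}$-free parts $\cF_i/x_{v_i}$ of several blocks simultaneously, so the overlaps are not ``at most one per block'' and can be unboundedly large.

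Here is a concrete counterexample. Take $k=2$. Let $\Delta$ be a tetrahedron on $a_1,a_2,b_1,b_2$ together with the edges $v_1a_1,v_1a_2,v_2b_1,v_2b_2$. Let $\cF$ consist of the ten quadratic monomials in $x_{a_1},x_{a_2},x_{b_1},x_{b_2}$, together with $x_{v_1}x_{a_1},x_{v_1}x_{a_2},x_{v_2}x_{b_1},x_{v_2}x_{b_2}$.
\begin{itemize}
\item The hypotheses hold, with $\Nbd_\Delta[v_1]=\{v_1,a_1,a_2\}$ and $\Nbd_\Delta[v_2]=\{v_2,b_1,b_2\}$ disjoint.
\item $|\cF|=14$ and $\partial\cF=\{x_{v_1},x_{v_2},x_{a_1},x_{a_2},x_{b_1},x_{b_2}\}$, so $|\partial\cF|=6$.
\item Yet $\partial_2(x)+\partial_2(14-x)\ge 7$ for every $2\le x\le 12$, so no block decomposition gives what you need.
\item The paper's intermediate bound $\partial_2(13)+1=6$ is tight here.
\end{itemize}

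\textbf{The missing idea.} Stop comparing with $\partial_k$ of block sizes. Write $\cF_0$ for the monomials divisible by no $x_{v_i}$, $\cF_i$ for those divisible by $x_{v_i}$, and $\cL_i:=\cF_i/x_{v_i}$ with $a_i:=|\cF_i|$. Split each block's shadow into its $x_{v_i}$-divisible part $x_{v_i}\partial\cL_i$ and the quotient set $\cL_i$.
\begin{itemize}
\item The sets $\partial\cF_0,x_{v_1}\partial\cL_1,\dots,x_{v_p}\partial\cL_p$ are pairwise disjoint. This gives $|\partial\cF|\ge\partial_k(a_0)+\sum_i\partial_{k-1}(a_i)$.
\item The sets $\cL_i$ are pairwise disjoint. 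This gives the separate bound $|\partial\cF|\ge\sum_i a_i$.
\item Part (ii) is then applied at level $k-1$, to merge the terms $\partial_{k-1}(a_i)$ into $\partial_{k-1}(a)+p-1$, where $a=\sum_i a_i-p+1$.
\item Part (i), the balance inequality, shows $\max\{a,\ \partial_{k-1}(a)+\partial_k(a_0)\}\ge\partial_k(a_0+a)$.
\end{itemize}

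Your Step 2 observation, that the overlap lies in the $x_{v_1}$-free part of $\partial\cF_1$, is the germ of this. However, it is the two-bound maximum argument that actually pays for the overlap. Separately, the induction on $p$ you announce at the start is never carried out.
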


\begin{proof}
The $p=0$ case is Macaulay's theorem, so assume that $p\geq1$. As no face of $\Delta$ contains any two $v_i$, no monomial is divisible by two of the $v_i$, so $|\cF|\geq p$. Set $N:=|\cF|-p+1$. We prove the lower bound
\begin{equation*}
 |\partial\cF|
 \geq
 \partial_k(N)+p-1
\end{equation*}
for the shadow. Once this is known, \cref{part:macaulay-sum} of \Cref{lem:macaulay-numerics} gives
\[
 \partial_k(|\cF|)
 =
 \partial_k(N+p-1)
 \leq
 \partial_k(N)+\partial_k(p)-1.
\]
Combining these bounds gives
\begin{align*}
    |\partial\cF|-\partial_k(|\cF|) & \geq \partial_k(N)+p-1 - (\partial_k(N)+\partial_k(p)-1) = p - \partial_k (p).
\end{align*}
We now prove $|\partial\cF| \geq \partial_k(N)+p-1$. For $1\leq i\leq p$, define $\cF_i:=\{m\in\cF:x_{v_i}\mid m\}$ and $\cL_i:=\{m/x_{v_i}:m\in\cF_i\}$, and let $\cF_0 := \cF\setminus\bigcup_{i=1}^p\cF_i$.
The assumption $\Nbd_\Delta[v_i]\cap\Nbd_\Delta[v_j]=\varnothing$ implies that we have a disjoint decomposition $\cF = \bigsqcup_{i=0}^p \cF_i$.
Write $a_i:=|\cF_i|$ for all $0 \le i \le p$, so that $a_i=|\cF_i|=|\cL_i|$ for all $i \geq 1$. Set $A:=\sum_{i=1}^p a_i$, so that $|\cF|=a_0+A$. 

Next, we obtain two lower bounds for $|\partial\cF|$. First, $\cL_i\subseteq\partial\cF$ for every $i$, and the assumption $\Nbd_\Delta[v_i]\cap\Nbd_\Delta[v_j]=\varnothing$ implies that $\cL_1,\ldots,\cL_p$ are pairwise disjoint. Every monomial in $\cL_i$ has degree $k-1\geq 1$, and thus
\begin{equation*}
 |\partial\cF|\geq A.
\end{equation*}

Second, the sets $\partial\cF_0, x_{v_1}\partial\cL_1,\ldots, x_{v_p}\partial\cL_p$ are pairwise disjoint subsets of $\partial\cF$: no monomial in $\partial \cF_0$ is divisible by any $x_{v_i}$, and the sets $x_{v_1}\partial\cL_1,\ldots, x_{v_p}\partial\cL_p$ are pairwise disjoint by the assumption on the neighborhoods of $v_i$. Macaulay's theorem then gives
\begin{align*}
 |\partial\cF|
 &\geq
 |\partial\cF_0|
 +
 \sum_{i=1}^p|\partial\cL_i|\\
 &\geq
 \partial_k(a_0)
 +
 \sum_{i=1}^p\partial_{k-1}(a_i).
\end{align*}

Set $a:=A-p+1$, so that $N=a_0+a$. By \cref{part:macaulay-sum} of
\Cref{lem:macaulay-numerics}, we have
\[
 \sum_{i=1}^p\partial_{k-1}(a_i)
 \geq
 \partial_{k-1}(a)+p-1.
\]
These two lower bounds for $|\partial \cF|$ give
\begin{equation}
 |\partial\cF|
 \geq
 \max\bigl\{
   a,\,
   \partial_{k-1}(a)+\partial_k(a_0)
 \bigr\} + p-1.
 \label{eq:shadow-max-bound}
\end{equation}
Now, observe that the maximum in \cref{eq:shadow-max-bound} is at least $\partial_k(N)$: if $a\geq \partial_k(N)$, this is immediate, and if $a<\partial_k(N)$, then \cref{part:macaulay-balance} of \Cref{lem:macaulay-numerics} gives
\[
 \partial_{k-1}(a)+\partial_k(N-a)
 \geq
 \partial_k(N).
\]
Since $N-a=a_0$, the second expression inside the maximum in
\cref{eq:shadow-max-bound} is at least $\partial_k(N)$. Therefore
\[
 |\partial\cF|
 \geq
 \partial_k(N)+p-1
\]
as desired.
\end{proof}

\section{Initial subspaces}
\label{sec:initspaces}
To apply \Cref{lem:separated-shadow} to affine stresses, we will take initial subspaces of $\sk$. Let $\prec$ be a monomial order on $R:= \mathbb R [x_v \ : \ v\in V]$ for a finite set $V$. For a homogeneous degree-$k$ polynomial $f \in R_k$, let $\operatorname{in}_{\prec}(f)$ denote the initial monomial of $f$, with coefficient 1. For a vector subspace $W \subseteq R_k$, let $\operatorname{in}_{\prec}(W)$ denote the set of initial monomials of polynomials in $W$. 

For a vector subspace $W \subseteq R_k$, let
\[
    D(W):=\sum_{v\in V}\partial_v W\subseteq R_{k-1}
\]
denote the span of the linear coordinate derivatives of polynomials in $W$. Here $\partial_v:=\partial/\partial x_v$.

The following result is standard, see, for example, \cite[Chapter~15]{Eisenbud}.
\begin{lemma}
\label[lemma]{lem:init}
    Let $W \subseteq R_k$ be a vector subspace for $k \geq 1$. Set $\cF := \operatorname{in}_{\prec}(W)$ and $\cH := \operatorname{in}_{\prec}(D(W))$.
    Then 
    \begin{align*}
        |\cF| & = \dim W, \\ 
        |\cH| & = \dim D(W),
    \end{align*}
    and
    \[
        \partial \cF \subseteq \cH.
    \]
\end{lemma}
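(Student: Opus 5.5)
The plan is the standard Gröbner-type argument via Gaussian elimination with respect to $\prec$, treating the three claims in turn.

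\emph{Counting claims.} We first show $|\operatorname{in}_{\prec}(U)| = \dim U$ for any subspace $U \subseteq R_j$; this gives both $|\cF| = \dim W$ and $|\cH| = \dim D(W)$. Choose a basis of $U$ and row-reduce it with respect to the order $\prec$ on the finitely many degree-$j$ monomials. This produces a basis $f_1, \ldots, f_n$ with pairwise distinct initial monomials, and we may arrange that no $f_i$ contains the initial monomial of any other $f_j$. Distinct leading terms force linear independence, so $n = \dim U$. Conversely, any nonzero $f \in U$ is a combination $\sum c_i f_i$, and its initial monomial equals the $\prec$-largest $\operatorname{in}_{\prec}(f_i)$ with $c_i \neq 0$, because no cancellation can occur among distinct leading monomials. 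Hence $\operatorname{in}_{\prec}(U) = \{\operatorname{in}_{\prec}(f_i)\}$ has exactly $\dim U$ elements.

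\emph{Shadow containment.} Let $m \in \cF$, say $m = \operatorname{in}_{\prec}(f)$ with $f \in W$, and let $x_v \mid m$. We want $m/x_v \in \cH$, and the natural candidate is $\partial_v f \in D(W)$. The derivative $\partial_v$ sends each monomial $x^\alpha$ with $\alpha_v > 0$ to $\alpha_v x^\alpha / x_v$ and kills the monomials not divisible by $x_v$. On monomials divisible by $x_v$, the map $x^\alpha \mapsto x^\alpha/x_v$ is injective. It is also order-preserving, since monomial orders are compatible with multiplication: if $u \prec u'$ with both divisible by $x_v$, then $u/x_v \prec u'/x_v$, for otherwise multiplying by $x_v$ would give a contradiction. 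Consequently the monomials of $\partial_v f$ are the images of the monomials of $f$ divisible by $x_v$, with nonzero coefficients, since $\alpha_v > 0$ and the characteristic is $0$. Among these, $m$ is the largest, so $\operatorname{in}_{\prec}(\partial_v f) = m/x_v$. Thus $m/x_v \in \cH$.

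\emph{Main obstacle.} The only subtle point is this last step: ensuring the initial term survives differentiation. It relies on two facts. First, division by $x_v$ is order-preserving on monomials divisible by $x_v$. Second, we work over $\mathbb R$, so the factor $\alpha_v$ is never zero; this would fail in positive characteristic, where one would instead use contraction in the inverse system. Everything else is routine linear algebra.
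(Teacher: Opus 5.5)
Your proposal is correct and follows the paper's argument. The paper cites the counting statements as standard (Eisenbud, Chapter~15), and your row-reduction argument is exactly that standard proof. It proves $\partial\cF \subseteq \cH$ just as you do, via $\init(\partial_v f) = \init(f)/x_v$ when $x_v \mid \init(f)$, using that the monomial order is multiplicative and that the characteristic is zero.
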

The inclusion $\partial \cF \subseteq \cH$ follows from the observation that $\init(\partial_v f)=\init(f)/x_v$ whenever $x_v \mid \init (f)$, which in turn follows from the multiplicativity of the monomial order and the ground field having characteristic zero. Note that the property of a polynomial having support on a simplicial complex $\Delta$ is preserved under taking initial terms. 

The following lemma combines the previous two, and is the main lower bound used in the proof of \Cref{mainthm}.

\begin{lemma}
\label[lemma]{mainbound}
    Let $W \subseteq R_k(\Delta)$ be a vector subspace, with $k\geq 2$. Let $f_1, \ldots, f_t$ be nonzero polynomials in $D(W)\subseteq R_{k-1}(\Delta)$, and let $\mu_i := \init (f_i)$ for $1 \le i \le t$. Assume that
    \begin{equation*}
        \Nbd_\Delta [\supp (\mu_i)] \cap \Nbd_\Delta [\supp(\mu_j)] = \varnothing
    \end{equation*}
    for $i \neq j$. Then
    \begin{equation*}
        \dim D(W) - \partial_{k} (\dim W) \geq t - \partial_k (t). 
    \end{equation*}
\end{lemma}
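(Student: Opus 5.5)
The plan is to pass to initial monomials via \Cref{lem:init}, split the defect into two pieces, and bound one piece by simple counting and the other by \Cref{lem:separated-shadow}. Set $\cF := \init(W)$ and $\cH := \init(D(W))$. By \Cref{lem:init}, $|\cF| = \dim W$, $|\cH| = \dim D(W)$ and $\partial\cF \subseteq \cH$. Since $W \subseteq R_k(\Delta)$ and taking initial terms preserves support on $\Delta$, $\cF$ is a set of degree-$k$ $\Delta$-monomials. We then write
\[
 \dim D(W) - \partial_k(\dim W) = \bigl[\,|\cH| - |\partial\cF|\,\bigr] + \bigl[\,|\partial\cF| - \partial_k(|\cF|)\,\bigr].
\]

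First I will check that the $\mu_i$ are distinct elements of $\cH$. Each $\mu_i \in \cH$ because $f_i \in D(W)$ is nonzero. Each $\mu_i$ has degree $k-1 \geq 1$, so $\supp(\mu_i)$ is nonempty. The disjointness hypothesis on the neighborhoods then forces the supports, and hence the monomials, to be pairwise distinct. After reordering, let $\mu_1,\ldots,\mu_p$ be those lying in $\partial\cF$ and $\mu_{p+1},\ldots,\mu_t$ those lying in $\cH\setminus\partial\cF$. This immediately gives $|\cH| - |\partial\cF| \geq t-p$.

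For the second bracket, for each $1 \le i \le p$ I choose some $m_i \in \cF$ and a variable with $\mu_i = m_i/x_{v}$, and pick any vertex $v_i \in \supp(\mu_i)$. Then $x_{v_i} \mid \mu_i \mid m_i$, so some monomial of $\cF$ is divisible by $x_{v_i}$. Moreover, $\Nbd_\Delta[v_i] \subseteq \Nbd_\Delta[\supp(\mu_i)]$, so these closed neighborhoods are pairwise disjoint. Since $k \geq 2$, \Cref{lem:separated-shadow} applies and gives $|\partial\cF| - \partial_k(|\cF|) \geq p - \partial_k(p)$; the case $p=0$ is just Macaulay's theorem with $\partial_k(0)=0$.

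Summing the two bounds gives $\dim D(W) - \partial_k(\dim W) \geq t - \partial_k(p)$. It remains to see that $\partial_k(p) \leq \partial_k(t)$ for $p \leq t$, which is the monotonicity of $\partial_k$. This follows from its description as the minimal shadow size: a set of $t$ monomials realizing $\partial_k(t)$ contains a subset of size $p$, whose shadow is no larger. I expect the only delicate point to be the bookkeeping that the $\mu_i$ are distinct and that the chosen $v_i$ inherit the separation hypothesis; both follow directly from the neighborhood assumption, so I do not anticipate a genuine obstacle.
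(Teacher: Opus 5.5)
Your proposal is correct and follows the paper's proof essentially step for step. You use the same split via \Cref{lem:init}, the same count of $t-p$ monomials in $\cH\setminus\partial\cF$, the same application of \Cref{lem:separated-shadow} to vertices $v_i\in\supp(\mu_i)$, and the same monotonicity finish. Your extra justifications (that the $\mu_i$ are distinct, the $p=0$ case, and monotonicity via minimal shadows) are all sound.
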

\begin{proof}
    Set $\cF:= \init (W)$ and $ \cH:=\init \bigl ( D(W)\bigr)$. By \Cref{lem:init}, we have $|\cF|=\dim W$, $|\cH|=\dim D(W)$ and $\partial\cF\subseteq\cH$, so that $\dim D(W) \geq |\partial\cF| \geq \partial_k (|\cF|) $. Thus
    \begin{equation}
    \label{secondsplit}
        \dim D(W) - \partial_{k} (\dim W) = \bigl [|\cH| - |\partial \cF| \bigr ] + \bigl [|\partial \cF| - \partial_{k} (|\cF|)\bigr ].
    \end{equation}

    As $f_i \in D(W)$, we have $\mu_i \in \cH$ for all $i$. The monomials $\mu_i$ have positive degree and are pairwise distinct.
    After reordering, we may assume that $\{\mu_1, \ldots, \mu_p\} \subseteq \partial \cF$, and $\{\mu_{p+1}, \ldots, \mu_t\} \subseteq \cH \setminus \partial \cF$.  For each $\mu_1, \ldots, \mu_p$, choose a vertex $v_i$ with $x_{v_i} \mid \mu_i$. Since $\mu_i\in\partial\cF$, it divides some monomial of $\cF$, which is consequently divisible by $x_{v_i}$. Moreover,
    \[
     \Nbd_\Delta[v_i]
     \subseteq\Nbd_\Delta[\supp(\mu_i)].
    \]
    By the neighborhood assumption, we may apply \Cref{lem:separated-shadow} with these $v_1,\ldots, v_p$ to get the lower bound
    \[
        |\partial \cF| - \partial_k (|\cF|) \geq p - \partial_{k} (p)
    \]
    for the second term in \cref{secondsplit}. As $\{\mu_{p+1}, \ldots, \mu_t\} \subseteq \cH \setminus \partial \cF$, we have $|\cH| - |\partial \cF| \geq t - p$, so that
    \begin{align*}
        \dim D(W) - \partial_{k} (\dim W) & \geq t - p + p - \partial_{k} (p) \\
        & \geq t - \partial_k (t),
    \end{align*}
    where we used $p\leq t$ and the monotonicity of $\partial_k$.
\end{proof}

\section{Local stresses and separated spheres}
\label{sec:distant}
In this section, we present the results of Adiprasito--Nevo--Samper required for the proofs of the main results. \Cref{lem:local-stress} produces an affine $k$-stress for each homologically full $(k-1)$-sphere in $\partial P$, which is used for the proof of \Cref{mainthm}. \Cref{lem:distant-spheres} constructs increasingly many separated homologically full spheres as $P_n \to K$, which combined with \Cref{mainthm} proves part~\textup{(ii)} of \Cref{conj:kalai}.

We first recall the inverse system realization of affine stresses.  Let
$P\subseteq\mathbb{R}^d$ be a simplicial $d$-polytope with boundary
complex $\Delta$.  Translate $P$ so that the origin lies in its
interior, write $p(v)=(p_1(v),\ldots,p_d(v))$ for its vertices, and
consider the Stanley--Reisner ring $\kk[\Delta] =\kk[X_v:v\in V(\Delta)]/I_\Delta$. We use the linear system of parameters given by $\theta_j:=\sum_v p_j(v)X_v$ for $1\leq j\leq d$, and the Lefschetz element $\ell = \sum_vX_v$. In the dual variables, the space of affine $k$-stresses is
\begin{equation*}
 \cS_k^a(P)
 :=\left\{
 f\in R_k(\Delta):
 \left(\sum_v p_j(v)\partial_v\right)f=0\ \text{for} \ 1\leq j\leq d,
 \quad
 \left(\sum_v\partial_v\right)f=0
 \right\}.
\end{equation*}
This is the degree-$k$ inverse system of $\kk[\Delta]/(\theta_1,\ldots,\theta_d,\ell)$. This forms a vector space of dimension
\[
 \dim\cS_k^a(P)=g_k(P)
\]
for $0\leq k\leq \lfloor\frac{d}{2} \rfloor$ (for background on stress spaces, see, for example, \cite{Lee,McMullen}, \cite[Theorem~4.7]{ANS}, or \cite[Section~2]{MNZ}). Affine stresses can equivalently be defined using Minkowski weights. In this description, the coefficients of squarefree terms, corresponding to simplices in $\partial P$, satisfy the Minkowski balancing equations, and these squarefree coefficients determine the corresponding polynomial uniquely. 


As constant-coefficient differential operators commute and face support is closed under taking coordinate derivatives, we additionally have
\[
 D\bigl(\cS_{k+1}^a(P)\bigr)\subseteq\cS_k^a(P).
\]

Recall that a simplicial $(k-1)$-sphere $\gamma\subseteq\Delta$ is homologically full in $\Delta$ if the fundamental class of $\gamma$ maps to a nonzero homology class in $\widetilde H_{k-1} \bigl(\Delta_{V(\gamma)};\mathbb R\bigr)$ under the inclusion $\gamma \hookrightarrow \Delta_{V(\gamma)}$. In particular, this condition holds when $\gamma$ is an induced subcomplex. 

The following lemma gives the connection between the homologically full $(k-1)$-spheres in $\partial P$ appearing in \Cref{mainthm} and $g_k(P)$.

\begin{lemma}[{\cite[Lemma~4.6]{ANS}}]
\label[lemma]{lem:local-stress}
Let $P\subseteq\mathbb R^d$ be a simplicial $d$-polytope with $\Delta=\partial P$ and $1\leq k\leq\left\lfloor\frac d2\right\rfloor$. Suppose that $\gamma\subseteq\Delta$ is a homologically full simplicial $(k-1)$-sphere in $\Delta$. Then there exists a nonzero affine $k$-stress
$\lambda\in\cS_k^a(P)$ with
\[
 \vsupp(\lambda)
 \subseteq\Nbd_\Delta[V(\gamma)].
\]
\end{lemma}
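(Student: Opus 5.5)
The plan is to build $\lambda$ as a stress on a small subcomplex of $\Delta$ and then regard it as a stress on all of $\Delta$. The defining conditions of $\cS_k^a(P)$ are of two kinds. The first is the support condition $f\in R_k(\Delta)$. The second is a set of constant-coefficient differential equations that do not refer to $\Delta$ at all. Hence any $f\in R_k(\Sigma)$ annihilated by $\sum_v p_j(v)\partial_v$ ($1\le j\le d$) and by $\sum_v\partial_v$ lies in $\cS^a_k(P)$, for any subcomplex $\Sigma\subseteq\Delta$. We take $\Sigma:=\Delta_{N}$, the induced subcomplex on $N:=\Nbd_\Delta[V(\gamma)]$. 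Then $\vsupp$ of any such $f$ is automatically contained in $\Nbd_\Delta[V(\gamma)]$, and the statement reduces to
\[
\bigl(\kk[\Sigma]/(\theta_1,\ldots,\theta_d,\ell)\bigr)_k\neq 0,
\]
with $\theta_j,\ell$ restricted to the variables $X_v$, $v\in N$. Indeed, the degree-$k$ inverse system of this quotient is exactly the space of such $f$.

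To prove the nonvanishing we use the homological fullness of $\gamma$. Set $U:=V(\gamma)$. Since $[\gamma]\neq0$ in $\widetilde H_{k-1}(\Delta_U;\kk)$, Hochster's formula gives a nonzero multigraded Betti number $\beta_{|U|-k,U}(\kk[\Delta])$, and equally for $\kk[\Sigma]$, because $\Sigma_U=\Delta_U$. We then localize at the star of $\gamma$. For each vertex $u\in U$, the closed star of $u$ in $\Delta$ lies in $\Sigma$, so $\Sigma$ contains a full $(d-1)$-dimensional neighbourhood of $\gamma$. We intend to pass to the relative Stanley--Reisner module (face module) of the pair $(\operatorname{st}_\Delta\gamma,\partial)$, where $\operatorname{st}_\Delta\gamma:=\bigcup_{u\in U}\operatorname{st}_\Delta u$ is the union of the closed vertex stars and $\partial$ denotes its boundary; this is a Cohen--Macaulay homology ball with $\theta$ as an l.s.o.p. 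Its Artinian reduction is Gorenstein-dual to the reduction of $\operatorname{st}_\Delta\gamma$ itself. The class of $\gamma$ should then give an explicit nonzero element. Concretely, we take the squarefree Minkowski weight on the $(k-1)$-faces of $\gamma$ defined by the fundamental cycle, and correct it by coefficients on faces in the stars of vertices of $\gamma$ so that the balancing equations hold. Each correction only uses faces meeting a vertex of $\gamma$, so everything stays inside $N$. This is the mechanism of \cite[Lemma~4.6]{ANS}, which we would follow.

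The main obstacle is the nonvanishing step: ensuring that the corrected weight is not forced to be zero, that is, that the cycle $[\gamma]$ is not killed in the Artinian reduction. This is exactly where homological fullness is needed, rather than merely $\gamma$ being a sphere in $\Delta$. A sphere that bounds inside $\Delta_U$ would produce a weight cancelled by the stress equations. We would first try to show that the map $\widetilde H_{k-1}(\Delta_U)\to\bigl(\kk[\Sigma]/(\theta,\ell)\bigr)^\vee_k$ induced by the Koszul/Hochster identification is injective on the class $[\gamma]$. The argument would use that $k\le\lfloor d/2\rfloor$, so the relevant degree lies below the middle, where the Artinian reduction of a homology ball has no extra cancellation from the boundary. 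Once nonvanishing is established, the local-to-global remark of the first paragraph completes the proof.
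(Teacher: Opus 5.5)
The paper does not prove this lemma; it quotes it from \cite[Lemma~4.6]{ANS}. Your first paragraph is correct. With $N=\Nbd_\Delta[V(\gamma)]$ and $\Sigma=\Delta_N$, nonzero affine $k$-stresses with $\vsupp\subseteq N$ correspond exactly to nonzero elements of $\bigl(\kk[\Sigma]/(\theta_1,\ldots,\theta_d,\ell)\bigr)_k^\vee$.

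That is only a reformulation, however. The nonvanishing of this space is the entire content of the lemma, and you do not prove it. What follows is a plan (``we intend'', ``should then give'', ``we would first try''), and its concrete ingredients fail:
\begin{itemize}
\item $\bigcup_{u\in V(\gamma)}\operatorname{st}_\Delta u$ is not in general a Cohen--Macaulay homology ball. For the free sum of an $m$-gon and a triangle in $\mathbb R^4$, $\Delta=C_m*C_3$, and the induced cycle $\gamma=C_m$ has vertex stars covering the whole $3$-sphere. After a barycentric subdivision, the union of the closed vertex stars of an induced cycle is a regular neighbourhood, i.e.\ a solid torus $T$. Since $H_2(T,\partial T;\kk)\cong H^1(T;\kk)\neq 0$, the relative face module is not Cohen--Macaulay, and the Gorenstein duality you invoke is unavailable.
\item Hochster's formula does give $\beta_{|U|-k,U}(\kk[\Sigma])\neq 0$. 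But the standard way to turn a Betti number into a nonzero element of $\bigl(\kk[\Sigma]/(\theta,\ell)\bigr)_k$ uses that $\theta$ is a regular sequence and that $\ell$ is injective in the relevant degrees. Neither holds for $\Sigma$ in general.
\item On $\Delta$ itself, Cohen--Macaulayness and hard Lefschetz are available, but $\theta$ and $\ell$ are not multigraded. The fact that the Betti number sits in multidegree $U$ is lost, so you get at best a global statement like $g_k(P)>0$, with no locality.
\item The map $\widetilde H_{k-1}(\Delta_U)\to\bigl(\kk[\Sigma]/(\theta,\ell)\bigr)_k^\vee$ that you want to be injective is never defined.
\item ``Correct the fundamental-cycle weight on faces in the stars so that balancing holds'' restates the claim. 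Nothing shows that such a correction exists and stays nonzero.
\end{itemize}

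The deeper missing ingredient is the role of $k\le\lfloor d/2\rfloor$, without which the lemma is false. Take the bipyramid over a tetrahedron $Q$, so $d=4$. Then $\gamma=\partial Q$ is an induced $2$-sphere, yet $\cS^a_3(P)=0$. Indeed, with $A=\kk[\Delta]/(\theta)$ one has $h=(1,2,2,2,1)$, and $\ell\colon A_2\to A_3$ is onto because $\ell^2\colon A_1\to A_3$ is an isomorphism by hard Lefschetz. So any proof must use an input that separates $k\le\lfloor d/2\rfloor$ from larger $k$. In the stress setting this is the Lefschetz property of $\ell$, for $\partial P$ or for its vertex links, and it must be combined with the local homological information. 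Your sketch contains no such step. The remark about ``no extra cancellation from the boundary of a homology ball below the middle'' does not supply it, since it concerns balls rather than $\ell$. What is missing is an actual mechanism converting $[\gamma]\neq 0$ in $\widetilde H_{k-1}(\Delta_U)$, together with this Lefschetz input, into a nonzero stress supported in $N$. That is precisely the part the paper outsources to \cite[Lemma~4.6]{ANS}.
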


Next, we review the geometric construction of Adiprasito--Nevo--Samper that supplies us with increasingly many homologically full spheres in $\partial P_n$ as $n \to \infty$, all satisfying the hypothesis of \Cref{lem:local-stress}. The additional property we require is that for large enough $n$, these spheres can be separated by any prescribed graph distance $R$. The $R=1$ case is proved in \cite[Theorem~4.5]{ANS}, and the proof of the general case is essentially the same. We include the main ideas for completeness.

\begin{lemma}[{\cite[Theorem~4.5]{ANS}}]
\label[lemma]{lem:distant-spheres}
Let $K\subseteq\mathbb R^d$ be a $C^1$ convex body, and let
$P_n\to K$ be a sequence of simplicial $d$-polytopes converging
in the Hausdorff metric. Let $\Delta_n:=\partial P_n$ and $1\leq k\leq\left\lfloor\frac d2\right\rfloor-1$. For all positive integers $s$ and $R$, and all sufficiently large $n$, there exist $s$ homologically full simplicial $(k-1)$-spheres $\gamma_{1,n},\ldots,\gamma_{s,n}\subseteq\Delta_n$ such that
\[
 \dist_{\Delta_n^{(1)}}(V(\gamma_{i,n}),V(\gamma_{j,n}))>R
\]
for $i \neq j$.
\end{lemma}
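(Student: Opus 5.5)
The plan is to rerun the construction in \cite[Theorem~4.5]{ANS} locally around $s$ well-chosen points of $\partial K$, and to add one new ingredient: a \emph{locality estimate}. This estimate says that a vertex at graph distance at most $R$ from a vertex near a good point $x\in\partial K$ is itself close to $x$ in $\mathbb R^d$. Granting it, spheres built in pairwise disjoint Euclidean neighborhoods of distinct good points $x_1,\dots,x_s$ are automatically more than $R$ apart in $\Delta_n^{(1)}$.

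\emph{Choice of points and locality estimate.} I would pick $x_1,\dots,x_s\in\partial K$ to be distinct exposed points of $K$. They exist in abundance: by Straszewicz's theorem exposed points are dense in the extreme points, and $K$ has infinitely many extreme points. Since $K$ is $C^1$, each $x_i$ has a unique outer normal $u_i$, and the exposed face of $K$ in direction $u_i$ is $\{x_i\}$. The claim to prove is the following. For every $\varepsilon>0$ there is $\eta>0$ such that, for $n$ large, if $v$ is a vertex of $P_n$ with $|v-x_i|<\eta$, then every facet of $P_n$ containing $v$ has outer normal within $\varepsilon$ of $u_i$, and every vertex of that facet lies within $\varepsilon$ of $x_i$.

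I would prove this by contradiction with a compactness argument. Take a subsequence of facets $F_n\ni v_n$ with $v_n\to x_i$ and normals $w_n\to w$. The supporting hyperplanes of $F_n$ converge to a supporting hyperplane of $K$ through $x_i$ (Hausdorff convergence), so $w=u_i$ by $C^1$ smoothness. Vertices $y_n\in F_n$ converge, along a subsequence, to points of $K$ on that supporting hyperplane, hence to the exposed face $\{x_i\}$.

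Iterating the estimate $R$ times with a chain $\eta_0<\eta_1<\dots<\eta_R$ gives: every vertex within graph distance $R$ of a vertex in $B(x_i,\eta_0)$ lies in $B(x_i,\eta_R)$. Here I use that each edge lies in some facet. Choosing $\eta_R$ smaller than half the minimal distance between the $x_i$ makes the $R$-neighborhoods of the vertex sets in the $s$ balls pairwise disjoint, which is the separation claimed in the statement.

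\emph{Constructing the spheres.} Inside each $B(x_i,\eta_0)$ I would run the \cite{ANS} construction of a homologically full $(k-1)$-sphere; this needs $k\le\lfloor d/2\rfloor-1$. Near $x_i$ the complex $\Delta_n$ is a fine, nearly flat triangulated patch, since facet normals are close to $u_i$. The approach is to project to the hyperplane $u_i^\perp$ and obtain a geometric $(d-1)$-ball. In it one takes either an induced subcomplex around a small generic $(k-1)$-dimensional sphere, or the relevant link or section sphere as in \cite{ANS}. Homological fullness is verified there, and it is an intrinsic property of $\Delta_n$, so it is unaffected by the localization. The only extra requirement is that the resulting sphere has all its vertices in $B(x_i,\eta_0)$. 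This holds once $n$ is large, because the ANS sphere is produced in an arbitrarily small neighborhood of the chosen point.

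\emph{Main obstacle.} I expect the main obstacle to be the locality estimate, in particular ensuring that long edges cannot jump between far-apart regions. This is exactly where $C^1$ smoothness (uniqueness of the normal at $x_i$) and the choice of exposed points are used: at a non-smooth point, or on a flat face of $K$, facets of $P_n$ through a vertex near $x_i$ could be long. I would first attempt the compactness argument above and check carefully that the limit supporting hyperplane passes through $x_i$.
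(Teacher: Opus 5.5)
Your locality estimate and its $R$-fold iteration are correct. The limiting facet hyperplane is a supporting hyperplane through $x_i$, so it equals $H_{x_i}$ by $C^1$, and limits of facet vertices lie in $K\cap H_{x_i}=\{x_i\}$. The obstacle you flagged is therefore the easy part.

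The gap is the half you treat as routine: producing a homologically full simplicial $(k-1)$-sphere in $\Delta_n$ with all vertices in $B(x_i,\eta_0)$.

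\emph{The construction of \cite{ANS} does not give this.} Its spheres are Hausdorff approximations of shadow boundaries $\eta=K\cap\pi^{-1}(\partial\pi(K))$ of generic projections $\pi$ onto $k$-dimensional subspaces. Such an $\eta$ projects onto all of $\partial\pi(K)$, so it is not concentrated near any chosen point. The sentence ``the ANS sphere is produced in an arbitrarily small neighborhood of the chosen point'' has no basis.

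\emph{Your alternative does not close the gap either.} A subcomplex ``around'' a small sphere in the chart $u_i^\perp$ is a thickening, not a $(k-1)$-sphere. Even for a genuine sphere $\gamma$, you give no reason why $[\gamma]\neq 0$ in $\widetilde H_{k-1}(\Delta_{V(\gamma)};\mathbb R)$. In \cite{ANS} this is tied to the geometry of $\eta$, in particular $K\cap H_y\subseteq\eta$ for $y\in\eta$, which keeps faces of $\Delta_n$ spanned by vertices near $\eta$ close to $\eta$. A small sphere in a chart loses this as soon as flat pieces of $\partial K$ cross it. Such pieces can accumulate at an exposed point at every scale, e.g.\ for an outer parallel body of a convex body whose facets accumulate at a vertex. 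Facets of $P_n$ lying on these pieces need not shrink as $n\to\infty$, so nothing stops $\Delta_{V(\gamma)}$ from filling $\gamma$ in.

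The missing idea is this: the property you exploit at an exposed point, $K\cap H_{x_i}=\{x_i\}$, is a special case of the contact-closure property, and the shadow boundaries already have it. So drop the exposed points.
\begin{itemize}
\item Take pairwise disjoint shadow boundaries $\eta_1,\dots,\eta_s$ for generic $\pi_1,\dots,\pi_s$.
\item Take the homologically full spheres $\gamma_{i,n}$, Hausdorff-close to $\eta_i$, supplied by \cite[Lemmas~4.1--4.4]{ANS}.
\item Run your compactness argument with the compact set $\eta_i$ in place of $\{x_i\}$. If $v_n\to y\in\eta_i$ and $F_n\ni v_n$ are facets, the limiting hyperplane is $H_y$, so limits of vertices of $F_n$ lie in $K\cap H_y\subseteq\eta_i$.
\item Iterate $R$ times and use that distinct $\eta_i$ are at positive Euclidean distance. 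This gives the separation.
\end{itemize}
This is the paper's proof, phrased there as follows. A path $v_{0,n},\dots,v_{m,n}$ with $m\le R$ from $\gamma_{i,n}$ to $\gamma_{j,n}$ has limit points $v_0,\dots,v_m$, with consecutive ones on a common supporting hyperplane. By $C^1$ all these hyperplanes equal $H_{v_m}$, so $v_0\in K\cap H_{v_m}\subseteq\eta_j$, contradicting $\eta_i\cap\eta_j=\varnothing$.

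Keeping your localized spheres would need an extra idea. One option is central projection from a generic $(d-k-1)$-flat just outside $K$ near $x_i$, converted into the parallel-projection setting of \cite{ANS} by a projective transformation. Your proposal contains nothing of this kind.
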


\begin{proof}
Using the geometric construction in the proof of \cite[Theorem~4.5]{ANS} with sufficiently generic projections $\pi_i$ for $\ 1 \le i \le s$ onto $k$-dimensional subspaces, we get $s$ pairwise disjoint spheres $\{\eta_i\}_{i = 1}^s \subseteq\partial K$ defined as
\[
 \eta_i := K \cap \pi_i^{-1}(\partial \pi_i (K)).
\]
These spheres satisfy
\[
 K\cap H_y\subseteq \eta_i
\]
for all $y\in\eta_i$, where $H_y$ is the supporting hyperplane of $K$ at $y$. As $K$ is $C^1$, this hyperplane is unique.

Now, \cite[Lemmas~4.1--4.4]{ANS} imply that, for each $\eta_i$ and all large enough $n$, there exists a simplicial $(k-1)$-sphere $\gamma_{i,n}\subseteq\Delta_n$ close to $\eta_i$ in the Hausdorff metric that is homologically full in $\Delta_n$.

It remains to prove the assertion on the pairwise distance between the vertex sets $V(\gamma_{i,n})$. For a contradiction, suppose that this fails for infinitely many $n$. After passing to a subsequence, there are indices $i\neq j$, a fixed integer $1\leq m\leq R$, and paths $v_{0,n},\ldots,v_{m,n}$ of adjacent vertices with $v_{0,n}\in V(\gamma_{i,n})$ and $v_{m,n}\in V(\gamma_{j,n})$. Furthermore, $v_{r,n}\longrightarrow v_r$ for $0\leq r\leq m$, $v_0\in\eta_i$, and $v_m\in\eta_j$.

Now, for each $1\leq r\leq m$, let $H_{r,n}$ be a supporting hyperplane of $P_n$ containing the edge $[v_{r-1,n},v_{r,n}]$. Hausdorff convergence to $K$ then gives a supporting hyperplane $H_r$ of $K$ containing both $v_{r-1}$ and $v_r$. Since $K$ is $C^1$, the supporting hyperplane at each boundary point is unique, so $H_1=\cdots=H_m=H_{v_m}$. Thus $v_0\in K\cap H_{v_m}\subseteq\eta_j$, contradicting $v_0\in\eta_i$ and $\eta_i\cap\eta_j=\varnothing$. 

\end{proof}
 
\section{Proof of main theorem}
\begin{proof}[Proof of \Cref{mainthm}]
    Let $\sk$ and $\skp$ be the spaces of affine $k$- and $(k+1)$-stresses on $P$ respectively. Let $D_k:= D(\skp) \subseteq \sk$ be the span of the coordinate derivatives of $\skp$ in $\sk$. We then have $g_k = \dim \cS_k^a \geq \dim D_k \geq \partial_{k+1}(\dim \skp) = \partial_{k+1} (g_{k+1})$, where the second of the inequalities is Macaulay's theorem. We split the defect as
    \begin{equation}\label{deficitsplit}
        \delta_k = \bigl [ \dim \cS_k^a - \dim D_k\bigr ] +\bigl [ \dim D_k - \dk (\dim \cS_{k+1}^a) \bigr].
    \end{equation}

    By \Cref{lem:local-stress}, for each homologically full $(k-1)$-sphere $\gamma_i \subseteq \partial P, \ 1 \leq i \leq s$, we have a corresponding affine $k$-stress $\lambda_i \in \sk$ that satisfies $\vsupp (\lambda_i) \subseteq \Nbd_\Delta [\gamma_i]$, so that $\Nbd_\Delta[\vsupp(\lambda_i)] \subseteq \Nbd_\Delta^2[\gamma_i]$. The separation assumption
    \[
        \Nbd_\Delta^2[\gamma_i] \cap \Nbd_\Delta^2[\gamma_j] = \varnothing
    \]
    for $i \neq j$ implies the $\lambda_i$ are linearly independent. 
    
    Let $L$ denote the span of $\{\lambda_1, \ldots, \lambda_s\}$ in $\sk$. Set $t := \dim (L \cap D_k)$. We have an injection $L / (L \cap D_k) \hookrightarrow \sk / D_k$, so we get the lower bound 
    \[ 
        \dim \cS_k^a - \dim D_k \geq s - t
    \]
    for the first term in \cref{deficitsplit}.
    
    For the second term, let $\mu_i = \init (\lambda_i)$ for $1 \le i \le s$. As the $\lambda_i$ have disjoint supports, the initial monomial of every nonzero $f \in L$ is some $\mu_i$, so $\init (L \cap D_k) \subseteq \{\mu_1, \ldots, \mu_s\}$. By \Cref{lem:init}, we have $|\init (L \cap D_k)| = t$. The separation assumption on the spheres $\{\gamma_i\}_{i=1}^s$ and $\supp(\mu_i) \subseteq \vsupp(\lambda_i)$ then give
    \[
    \Nbd_\Delta [\supp (\mu_i)] \cap \Nbd_\Delta [\supp (\mu_j)] = \varnothing
    \]
    for $i \neq j$. Thus, we can apply \Cref{mainbound} with $W = \cS_{k+1}^a$ and $f_1,\ldots, f_t \in L \cap D_k$ polynomials whose initial monomials are the $t$ distinct elements of $\init (L \cap D_k)$ to get 
    \[ 
        \dim D_k - \dk (\dim \cS_{k+1}^a) \geq t - \partial_{k+1}(t),
    \]
    so that
    \begin{align*}
        \delta_k & \geq \bigl [ s - t \bigr ]+ \bigl [ t - \partial_{k+1}(t)\bigr ] \\
            & \geq s - \partial_{k+1}(s),
    \end{align*}
    where we used $t \le s$ and the monotonicity of $\partial_{k+1}$.
\end{proof}
In the proof, the homologically full spheres are used only to obtain the local stresses. The same proof therefore gives
\[
 g_k(P)-\partial_{k+1}\bigl(g_{k+1}(P)\bigr)
 \geq s-\partial_{k+1}(s)
\]
whenever there exist $s$ nonzero affine $k$-stresses $\lambda_1, \ldots, \lambda_s$ with $\Nbd_\Delta [\vsupp (\lambda_i)] \cap \Nbd_\Delta [\vsupp (\lambda_j)] = \varnothing$ for $i \neq j$.

\begin{proof}[Proof of part~\textup{(ii)} of \Cref{conj:kalai}]
As $P_n$ converges to $K$ in the Hausdorff distance, \Cref{lem:distant-spheres} with $R=4$ supplies us with increasingly many homologically full $(k-1)$-spheres $\{\gamma_i\}_{i=1}^s$, all satisfying the hypothesis of \Cref{mainthm}. Thus \Cref{mainthm}, combined with the fact that $\partial_{k+1}(s) = O(s^{\frac{k}{k+1}}) = o(s)$, gives us
\begin{equation*}
    \delta_k \geq s - \partial_{k+1}(s) \longrightarrow \infty.
\end{equation*}
\end{proof}

\section{Matroid Chow rings}
\label{sec:matroid-chow}

To conclude, we give a further application of \Cref{lem:separated-shadow} to Hilbert functions of matroid Chow rings. Let $M$ be a rank-$r$ loopless matroid with rank function $\rk$, and let $A^\bullet(M)$ be its Chow ring \cite{FeichtnerYuzvinsky}. Denote $a_i(M):=\dim_{\mathbb R}A^i(M)$.

We use the Feichtner--Yuzvinsky (FY) monomial basis \cite[Corollary~1]{FeichtnerYuzvinsky} of $A^\bullet(M)$.  For the lattice of flats $\mathcal L_M$ of $M$ with its maximal building set, this basis consists of the monomials
\[
    x_{F_1}^{b_1}\cdots x_{F_\ell}^{b_\ell}
\]
for all chains of flats $\varnothing=F_0\subsetneq F_1\subsetneq\cdots\subsetneq F_\ell$ with $0<b_i<\rk(F_i)-\rk(F_{i-1})$. A divisor of an FY-monomial is also an FY-monomial.

\begin{proposition}
\label[proposition]{prop:matroid-chow}
Let $M$ be a rank-$r$ loopless matroid, and suppose that $H_1,\ldots,H_p$ are hyperplanes of $M$ such that
\[
 \rk(H_i\cap H_j)\leq 1
\]
for all $i \neq j$. Then, for every $2\leq k\leq r-2$,
\[
 a_{k-1}(M)-\partial_k\bigl(a_k(M)\bigr)
 \geq
 p-\partial_k(p).
\]
\end{proposition}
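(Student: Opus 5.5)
The plan is to apply the argument of \Cref{lem:separated-shadow} to the set $\cF$ of all degree-$k$ FY-monomials, with the hyperplane variables $x_{H_1},\ldots,x_{H_p}$ in the role of the separated vertices $x_{v_1},\ldots,x_{v_p}$. Since a divisor of an FY-monomial is again an FY-monomial, $\partial\cF$ consists of degree-$(k-1)$ FY-monomials, so $|\partial\cF|\leq a_{k-1}(M)$. Together with $|\cF|=a_k(M)$ this gives
\[
 a_{k-1}(M)-\partial_k\bigl(a_k(M)\bigr)\geq |\partial\cF|-\partial_k(|\cF|),
\]
and it suffices to show that the right-hand side is at least $p-\partial_k(p)$.

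The obvious way to invoke \Cref{lem:separated-shadow} directly would be to take $\Delta$ to be the order complex of nonempty flats. However, there $\Nbd_\Delta[H_i]\cap\Nbd_\Delta[H_j]$ contains $E$ and possibly the atom $H_i\cap H_j$, so the hypothesis fails. I will instead observe that the proof of \Cref{lem:separated-shadow} uses the neighborhood hypothesis only through three consequences, and verify each of these directly for FY-monomials:
\begin{enumerate}[label=(\alph*)]
 \item Some monomial of $\cF$ is divisible by $x_{H_i}$, and no monomial of $\cF$ is divisible by two of the $x_{H_i}$.
 \item The sets $\cL_i=\{m/x_{H_i}: m\in\cF,\ x_{H_i}\mid m\}$ are pairwise disjoint.
 \item The sets $x_{H_i}\partial\cL_i$ are pairwise disjoint, and no monomial of $\partial\cF_0$ is divisible by any $x_{H_i}$.
\end{enumerate}
Once (a)--(c) hold, the rest of that proof goes through verbatim. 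It yields the two bounds $|\partial\cF|\geq A$ and $|\partial\cF|\geq \partial_k(a_0)+\sum_i\partial_{k-1}(a_i)$. Then \Cref{lem:macaulay-numerics} gives $|\partial\cF|\geq \partial_k(N)+p-1$ and finally $|\partial\cF|-\partial_k(|\cF|)\geq p-\partial_k(p)$.

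For (a): the monomial $x_{H_i}^k$ is FY for the chain $\varnothing\subsetneq H_i$, because $0<k<r-1=\rk(H_i)$ when $k\leq r-2$. Moreover, $H_i$ and $H_j$ are incomparable, so no chain contains both, which gives the second half of (a). The second half of (c) is immediate from the definition of $\cF_0$.

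For (b), which I expect to be the main step, suppose $n=m/x_{H_i}=m'/x_{H_j}$ with $i\neq j$.
\begin{itemize}
 \item Since $m'$ is not divisible by $x_{H_i}$, neither is $n$, so $x_{H_i}$ occurs in $m$ with exponent exactly $1$; similarly for $x_{H_j}$ in $m'$.
 \item Every flat in the support of $n$ lies in a chain with $H_i$ and also in one with $H_j$, so it is comparable to both. It is therefore either $E$ or a flat contained in $H_i\cap H_j$, hence of rank at most $1$ by assumption.
 \item The FY condition forbids a positive exponent on an atom, since $b_1<\rk(F_1)-0=1$. In the chain $H_i\subsetneq E$ it also forbids a positive exponent on $E$, since $b<\rk(E)-\rk(H_i)=1$.
\end{itemize}
Hence $n=1$, contradicting $\deg n=k-1\geq1$.

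The first half of (c) follows in the same way: an element of $x_{H_i}\partial\cL_i$ is divisible by $x_{H_i}$, and it divides a monomial of $\cF$, so by (a) it cannot be divisible by $x_{H_j}$. The only delicate point throughout is keeping track of the strict inequalities in the FY exponent conditions; the care needed is in checking exactly which hypotheses the proof of \Cref{lem:separated-shadow} actually uses.
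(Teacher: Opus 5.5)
Your proof is correct, and its combinatorial core matches the paper's. The three facts doing the work are the same in both:
\begin{itemize}
\item atoms never occur in FY-monomials, since $b_1<\rk(F_1)=1$;
\item $x_E$ never occurs together with a hyperplane variable, since $b<\rk(E)-\rk(H_i)=1$;
\item any other flat comparable to two distinct hyperplanes lies in $H_i\cap H_j$.
\end{itemize}

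The difference is in how this is packaged. You took $\Delta$ to be the order complex of flats, correctly saw that the neighborhood hypothesis fails there, and so reopened the proof of \Cref{lem:separated-shadow} to check only the consequences it actually uses.

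The paper instead takes $\Delta$ to be the complex whose faces are the supports of FY-monomials. The first two facts above say exactly that in this complex atoms are not vertices and no hyperplane is adjacent to $E$. So $\Nbd_\Delta[H_i]\cap\Nbd_\Delta[H_j]=\varnothing$ holds outright, and the lemma applies as a black box.

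The paper's route is shorter and requires no audit of the lemma's proof. Yours has a side benefit: it shows that \Cref{lem:separated-shadow} really only needs your conditions (a) and (b), since (c) follows from (a) and the definition of $\cF_0$. That pair of conditions is strictly weaker than the neighborhood hypothesis. For example, a common neighbor of $v_i$ and $v_j$ in $\Delta$ is harmless as long as $\cL_i\cap\cL_j=\varnothing$.
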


\begin{proof}
Let $\cF_k$ denote the degree-$k$ Feichtner--Yuzvinsky monomials, so that $|\cF_k|=a_k(M)$ and $\partial\cF_k\subseteq\cF_{k-1}$. Let $\Delta$ be the simplicial complex whose faces are the supports of FY monomials. As a divisor of an FY-monomial is an FY-monomial, this is a simplicial complex, and by construction every monomial in $\cF_k$ is a $\Delta$-monomial. Every face of $\Delta$ corresponds to a chain $F_1\subsetneq\cdots\subsetneq F_\ell$ of flats. 

Observe that as $k\leq r-2=\rk(H_i)-1$, $x_{H_i}^k$ belongs to $\cF_k$ for every $i$, so for every $H_i$ there is a corresponding vertex $v_i$ of $\Delta$. We will apply \Cref{lem:separated-shadow} to this complex and the vertices $v_i$. 

First, we claim that the neighborhoods $\Nbd_\Delta [v_i]$ are pairwise disjoint. Suppose for a contradiction that there is a vertex $v$ in $\Nbd_\Delta [v_i] \cap \Nbd_\Delta [v_j]$. This corresponds to a flat $F$ that is comparable to both $H_i$ and $H_j$, with $i \neq j$. 

As $v$ is a vertex of $\Delta$, the variable $x_F$ occurs with positive exponent in an FY-monomial, and hence $\rk(F)\geq2$. Distinct hyperplanes are incomparable, so $F$ is distinct from $H_i$ and $H_j$. Furthermore, as $\rk (E) - \rk (H_i) = 1$, the rank condition for FY monomials does not allow $F = E$, so $F$ is a proper subset of both $H_i$ and $H_j$. Thus $F \subseteq H_i \cap H_j$, which implies $\rk (H_i \cap H_j) \geq 2$, contrary to the assumption $\rk (H_i \cap H_j) \leq 1$ for $i \neq j$.

Now, we can apply \Cref{lem:separated-shadow} to get
\[
 |\partial\cF_k|-\partial_k(|\cF_k|)
 \geq p-\partial_k(p).
\]
As $|\partial\cF_k| \leq|\cF_{k-1}|=a_{k-1}(M)$ and $|\cF_k|=a_k(M)$, we get
\[
 a_{k-1}(M)-\partial_k\bigl(a_k(M)\bigr)
 \geq p-\partial_k(p).
\]
\end{proof}

\section*{Acknowledgements}
The author thanks his advisor Karim Adiprasito for many valuable conversations, comments, and for introducing him to this question. The author is supported by the Oskar Huttunen Foundation and partially by Horizon Europe ERC Grant number: 101045750 / Project acronym: HodgeGeoComb.

\section*{Declaration of AI usage}
An LLM was used to simplify the proofs of \Cref{lem:separated-shadow,mainbound} and to find the further application of \Cref{lem:separated-shadow} to matroid Chow rings. It was also used for proofreading, but not for generating any of the text. The main ideas, results, and proofs are due to the author.
\bibliographystyle{amsplain}
\bibliography{references}

\end{document}